\documentclass[12pt]{amsart}

\usepackage{amsmath,amssymb,amsbsy,amsfonts,amsthm,latexsym,
                     amsopn,amstext,amsxtra,euscript,amscd,mathrsfs}
\usepackage{amsmath,amssymb,amsbsy,amsfonts,latexsym,amsopn,amstext,cite,
                                               amsxtra,euscript,amscd,bm}
\usepackage{mathtools}
\usepackage{todonotes}
\usepackage{url}
\usepackage[colorlinks,linkcolor=blue,anchorcolor=blue,citecolor=blue,backref=page]{hyperref}

\renewcommand*{\backref}[1]{}
\renewcommand*{\backrefalt}[4]{%
    \ifcase #1 (Not cited.)%
    \or        (p.\,#2)%
    \else      (pp.\,#2)%
    \fi}



\begin{document}




\newfont{\teneufm}{eufm10}
\newfont{\seveneufm}{eufm7}
\newfont{\fiveeufm}{eufm5}
%
%
\newfam\eufmfam
 \textfont\eufmfam=\teneufm \scriptfont\eufmfam=\seveneufm
 \scriptscriptfont\eufmfam=\fiveeufm
%
%
\def\frak#1{{\fam\eufmfam\relax#1}}
%


\def\bbbr{{\rm I\!R}} 
\def\bbbm{{\rm I\!M}}
\def\bbbn{{\rm I\!N}} 
\def\bbbf{{\rm I\!F}}
\def\bbbh{{\rm I\!H}}
\def\bbbk{{\rm I\!K}}
\def\bbbp{{\rm I\!P}}
\def\bbbone{{\mathchoice {\rm 1\mskip-4mu l} {\rm 1\mskip-4mu l}
{\rm 1\mskip-4.5mu l} {\rm 1\mskip-5mu l}}}
\def\bbbc{{\mathchoice {\setbox0=\hbox{$\displaystyle\rm C$}\hbox{\hbox
to0pt{\kern0.4\wd0\vrule height0.9\ht0\hss}\box0}}
{\setbox0=\hbox{$\textstyle\rm C$}\hbox{\hbox
to0pt{\kern0.4\wd0\vrule height0.9\ht0\hss}\box0}}
{\setbox0=\hbox{$\scriptstyle\rm C$}\hbox{\hbox
to0pt{\kern0.4\wd0\vrule height0.9\ht0\hss}\box0}}
{\setbox0=\hbox{$\scriptscriptstyle\rm C$}\hbox{\hbox
to0pt{\kern0.4\wd0\vrule height0.9\ht0\hss}\box0}}}}
\def\bbbq{{\mathchoice {\setbox0=\hbox{$\displaystyle\rm
Q$}\hbox{\raise
0.15\ht0\hbox to0pt{\kern0.4\wd0\vrule height0.8\ht0\hss}\box0}}
{\setbox0=\hbox{$\textstyle\rm Q$}\hbox{\raise
0.15\ht0\hbox to0pt{\kern0.4\wd0\vrule height0.8\ht0\hss}\box0}}
{\setbox0=\hbox{$\scriptstyle\rm Q$}\hbox{\raise
0.15\ht0\hbox to0pt{\kern0.4\wd0\vrule height0.7\ht0\hss}\box0}}
{\setbox0=\hbox{$\scriptscriptstyle\rm Q$}\hbox{\raise
0.15\ht0\hbox to0pt{\kern0.4\wd0\vrule height0.7\ht0\hss}\box0}}}}
\def\bbbt{{\mathchoice {\setbox0=\hbox{$\displaystyle\rm
T$}\hbox{\hbox to0pt{\kern0.3\wd0\vrule height0.9\ht0\hss}\box0}}
{\setbox0=\hbox{$\textstyle\rm T$}\hbox{\hbox
to0pt{\kern0.3\wd0\vrule height0.9\ht0\hss}\box0}}
{\setbox0=\hbox{$\scriptstyle\rm T$}\hbox{\hbox
to0pt{\kern0.3\wd0\vrule height0.9\ht0\hss}\box0}}
{\setbox0=\hbox{$\scriptscriptstyle\rm T$}\hbox{\hbox
to0pt{\kern0.3\wd0\vrule height0.9\ht0\hss}\box0}}}}
\def\bbbs{{\mathchoice
{\setbox0=\hbox{$\displaystyle     \rm S$}\hbox{\raise0.5\ht0\hbox
to0pt{\kern0.35\wd0\vrule height0.45\ht0\hss}\hbox
to0pt{\kern0.55\wd0\vrule height0.5\ht0\hss}\box0}}
{\setbox0=\hbox{$\textstyle        \rm S$}\hbox{\raise0.5\ht0\hbox
to0pt{\kern0.35\wd0\vrule height0.45\ht0\hss}\hbox
to0pt{\kern0.55\wd0\vrule height0.5\ht0\hss}\box0}}
{\setbox0=\hbox{$\scriptstyle      \rm S$}\hbox{\raise0.5\ht0\hbox
to0pt{\kern0.35\wd0\vrule height0.45\ht0\hss}\raise0.05\ht0\hbox
to0pt{\kern0.5\wd0\vrule height0.45\ht0\hss}\box0}}
{\setbox0=\hbox{$\scriptscriptstyle\rm S$}\hbox{\raise0.5\ht0\hbox
to0pt{\kern0.4\wd0\vrule height0.45\ht0\hss}\raise0.05\ht0\hbox
to0pt{\kern0.55\wd0\vrule height0.45\ht0\hss}\box0}}}}
\def\bbbz{{\mathchoice {\hbox{$\sf\textstyle Z\kern-0.4em Z$}}
{\hbox{$\sf\textstyle Z\kern-0.4em Z$}}
{\hbox{$\sf\scriptstyle Z\kern-0.3em Z$}}
{\hbox{$\sf\scriptscriptstyle Z\kern-0.2em Z$}}}}
\def\ts{\thinspace}

\newtheorem{thm}{Theorem}
\newtheorem{lem}{Lemma}
\newtheorem{lemma}[thm]{Lemma}
\newtheorem{prop}{Proposition}
\newtheorem{proposition}[thm]{Proposition}
\newtheorem{theorem}[thm]{Theorem}
\newtheorem{cor}[thm]{Corollary}
\newtheorem{corollary}[thm]{Corollary}
\newtheorem{rem}[thm]{Remark}

\newtheorem{prob}{Problem}
\newtheorem{problem}[prob]{Problem}
\newtheorem{ques}{Question}
\newtheorem{question}[ques]{Question}


\numberwithin{equation}{section}
\numberwithin{thm}{section}

\def\squareforqed{\hbox{\rlap{$\sqcap$}$\sqcup$}}
\def\qed{\ifmmode\squareforqed\else{\unskip\nobreak\hfil
\penalty50\hskip1em\null\nobreak\hfil\squareforqed
\parfillskip=0pt\finalhyphendemerits=0\endgraf}\fi}

\def\fF{\EuScript{F}}

\def\cA{{\mathcal A}}
\def\cB{{\mathcal B}}
\def\cC{{\mathcal C}}
\def\cD{{\mathcal D}}
\def\cE{{\mathcal E}}
\def\cF{{\mathcal F}}
\def\cG{{\mathcal G}}
\def\cH{{\mathcal H}}
\def\cI{{\mathcal I}}
\def\cJ{{\mathcal J}}
\def\cK{{\mathcal K}}
\def\cL{{\mathcal L}}
\def\cM{{\mathcal M}}
\def\cN{{\mathcal N}}
\def\cO{{\mathcal O}}
\def\cP{{\mathcal P}}
\def\cQ{{\mathcal Q}}
\def\cR{{\mathcal R}}
\def\cS{{\mathcal S}}
\def\cT{{\mathcal T}}
\def\cU{{\mathcal U}}
\def\cV{{\mathcal V}}
\def\cW{{\mathcal W}}
\def\cX{{\mathcal X}}
\def\cY{{\mathcal Y}}
\def\cZ{{\mathcal Z}}
\def\pgdc{\textrm{gcd}}
\newcommand{\rmod}[1]{\: \mbox{mod} \: #1}

\def\Nm{{\mathrm{Nm}}}

\def\Tr{{\mathrm{Tr}}}

\def\epp{\mathbf{e}_{p-1}}

\def\ind{\mathop{\mathrm{ind}}}

\def\mand{\qquad \mbox{and} \qquad}

\def\M{\mathsf {M}}
\def\T{\mathsf {T}}

\newcommand{\commI}[1]{\marginpar{%
\begin{color}{magenta}
\vskip-\baselineskip 
\raggedright\footnotesize
\itshape\hrule \smallskip I: #1\par\smallskip\hrule\end{color}}}

\newcommand{\commM}[1]{\marginpar{%
\begin{color}{blue}
\vskip-\baselineskip 
\raggedright\footnotesize
\itshape\hrule \smallskip M: #1\par\smallskip\hrule\end{color}}}



\newcommand{\ignore}[1]{}

\hyphenation{re-pub-lished}

\parskip 1.5 mm

\def\GL{\operatorname{GL}}
\def\SL{\operatorname{SL}}
\def\PGL{\operatorname{PGL}}
\def\PSL{\operatorname{PSL}}
\def\li{\operatorname{li}}

\def\vec#1{\mathbf{#1}}

\def \F{{\mathbb F}}
\def \K{{\mathbb K}}
\def \Z{{\mathbb Z}}
\def \N{{\mathbb N}}
\def \Q{{\mathbb Q}}
\def \C {{\mathbb C}}
\def \R{{\mathbb R}}
\def\Fp{\F_p}
\def \fp{\Fp^*}

\def \Rc{{\mathcal R}}
\def \Qc{{\mathcal Q}}
\def \Ec{{\mathcal E}}

\def \DN{D_N}
\def\va{\mbox{\bf{a}}}

\def\Kc{\,{\mathcal K}}
\def\Ic{\,{\mathcal I}}

\def\\{\cr}
\def\({\left(}
\def\){\right)}
\def\fl#1{\left\lfloor#1\right\rfloor}
\def\rf#1{\left\lceil#1\right\rceil}

\def\Ln#1{\mbox{\rm {Ln}}\,#1}

\def \nd {\, | \hspace{-1.2mm}/\,}

 \def\e{\mathbf{e}}

\def\ep{\mathbf{e}_p}
\def\eq{\mathbf{e}_q}

\def\wt#1{\mbox{\rm {wt}}\,#1}

\def\Mob{M{\"o}bius }



\title[Algebraic trace functions
by  arithmetic   functions]{Sums of  algebraic trace functions  twisted
by arithmetic   functions}

\author{Maxim A. Korolev}
\address{Steklov Mathematical Institute of Russian Academy of Sciences, ul. Gubkina 8, Moscow, 119991 Russia}
\email{korolevma@mi.ras.ru}

\author{Igor E. Shparlinski}
\address{School of Mathematics and Statistics, University of New South Wales,
 Sydney NSW 2052, Australia}
\email{igor.shparlinski@unsw.edu.au}


\date{}

\pagenumbering{arabic}

\begin{abstract}
We obtain new bounds for short sums of isotypic trace functions
associated to some sheaf modulo prime $p$ of bounded conductor,  
 twisted by  the \Mob function and also by the  generalised divisor function. 
 These trace functions include Kloosterman sums and several other classical number 
theoretic objects.
Our bounds are nontrivial for intervals of length at least 
$p^{1/2+\varepsilon}$ with an arbitrary fixed $\varepsilon >0$, which is 
shorter than the length at least $p^{3/4+\varepsilon}$ in  the case of the  \Mob function 
and at least $p^{2/3+\varepsilon}$  in  the case of the divisor function required in recent 
results of  {\'E}.~Fouvry,  E.~Kowalski and P. ~Michel (2014) and 
E.~Kowalski, P. ~Michel and W.~Sawin (2018), respectively.
\end{abstract}

\keywords{Kloosterman sum,  \Mob function}
\subjclass[2010]{11L05,  11T23}

\maketitle


\section{Background and motivation}
For a prime $p$ and arbitrary integers $m$ and $n$, we define the $s$-dimensional 
Kloosterman sums
$$
K_{s,p}(n)= \sum_{\substack{x_1, \ldots, x_s =1\\ 
x_1 \cdots x_s \equiv n \bmod  p}}^{p-1} \e\left(\frac{x_1 + \ldots + x_s }{p}\right),
$$
where
for a real $z$ we denote
$$
\e(z)=e^{2\pi i z}.
$$

The classical Deligne bound yields the estimate
\begin{equation}
\label{eq:Deligne}
|K_{s,p}(n)| \le s p^{(s-1)/2},
\end{equation}
see~\cite[Equation~(11.58)]{IwKow} and the follow-up discussion.

Since the bound~\eqref{eq:Deligne} is essentially optimal, it is natural to
study cancellations between Kloosterman sum in various families of pairs
$(n,p)$ of parameters, with and without some weights attached. 

Studying cancellations for fixed $m$ and $n$ and varying $n$ is related to the
Linnik conjecture~\cite{Lin}, and thus to  the groundbreaking results of  Kuznetsov~\cite{Kuz},
see also~\cite[Chapter~16]{IwKow}. We refer to~\cite{AnDu,BlMil,Drap,Kir, SarTsim} for some recent
developments and applications.

Recently, the dual question about cancellations between Kloosterman sums $K_s(n;p)$, and more general functions (see below) has
attracted quite a lot of attention due to its applications to several other problems,
see~\cite{BFKMM1,BFKMM2,FKM1,FKM2,FKMRRS,FMRS,KMS1,KMS2,LSZ1,LSZ2,Shp,ShpZha,WuXi,Xi-FKM} and references therein.

Furthermore, it turns out that Kloosterman sums are representatives of a much richer 
class of {\it  isotypic trace functions $\cK(n)$\/} which are associated with  isotypic trace sheaves $\fF$ modulo $p$ of bounded conductor, 
we refer to~\cite{FKM1,FKM1.5} for precise definitions and properties of trace functions. 

For the purpose of this work we do not need to 
know any  specific deep properties of trace functions, it is quite enough to use the facts
which are summarised, for example, in~\cite{FKM1,FKM1.5}.
We also note that this class of functions includes
\begin{itemize}
\item normalised Kloosterman sums $ p^{-(s-1)/2} K_{s,p}(n)$;
\item traces of Frobenius of elliptic curves modulo $p$;
\item exponential functions of the form $\e\(\psi(n)/p\)$
with a rational function $\psi(Z) \in \Q(Z)$,  and similar values of multiplicative 
characters $\chi\(\psi(n)\)$, 
as well as their products (excluding for the exceptional function $\e\left(an/p\right) \chi(n)$ with $a\in \Z$);
\end{itemize}
see, for example,~\cite[Remark~1.4]{FKM1}.


Here, for a positive   integer $N$, a prime  $p$, and  an isotypic trace function $\cK(n)$ modulo  $p$
 we consider the sum
\begin{equation}
\label{eq: sum M}
\M_{p}(\cK, N) =
\sum_{n  \leq N}\mu(n) \cK(n)
\end{equation}
with the \Mob function, which is given by
$\mu(n)=0$ if an integer $m$ is divisible by a
prime square and $\mu(n)=(-1)^r$ if $m$ is a product
of $r$ distinct primes.

We note that some of the motivation to consider the sums~\eqref{eq: sum M}
comes from the program devised by Sarnak~\cite{Sarn}  to
establish  the pseudorandomness of the \Mob function
and in particular, to show that it is not correlated with other arithmetic sequences.

In particular, Fouvry,  Kowalski and Michel~\cite[Theorem~1.7]{FKM1}, have  given a bound on $\M_{p}(\cK, N)$ 
for a wide class of isotypic trace functions $\cK(n)$ of bounded conductor
with a power saving against the trivial bound
$$
\M_{p}(\cK, N) = O\(N\)
$$
assuming that
\begin{equation}
\label{eq: old range N}
N \ge p^{3/4 + \varepsilon}
\end{equation}
for some fixed $\varepsilon > 0$.
Furthermore, it is natural to expect  that~\cite[Theorem~1.8]{BFKMM2} can be extended to the sums~\eqref{eq: sum M}
and thus improve the bound of~\cite[Theorem~1.7]{FKM1}, however it is not likely
to extend the range~\eqref{eq: old range N}.

Here we use a different approach to obtain a nontrivial bound
on the sums~\eqref{eq: sum M} in a  shorter range
\begin{equation}
\label{eq: new range N}
 N \ge p^{1/2 + \varepsilon}.
\end{equation}
On the other hand, the saving now is only logarithmic. We remark that Fouvry,  Kowalski and Michel~\cite[Remark~1.9]{FKM1} mention the possibility of a nontrivial
bound in the range~\eqref{eq: new range N} via the method of Bourgain, Sarnak
and Ziegler~\cite[Theorem~2]{BSZ}. We however 
obtain an explicit bound on the saving, which seems to be stronger than the one
achievable via the approach of~\cite{BSZ}. More precisely, using a version of~\cite[Theorem~2]{BSZ}
seem to lead to a saving which is about a square-root of our saving.

Furthermore, for a fixed integer $\nu\ge 1$ we also consider the sums
$$
\T_{p,\nu}(\cK,N)= \sum_{n\le N}\tau_\nu(n)\cK(n).
$$
with the  generalised divisor function  $\tau_\nu(n)$, which is defined as the number
of ordered representations $n = d_1\ldots d_\nu$ with  integer numbers $d_1, \ldots, d_\nu \ge 1$.
Using the bound
\begin{equation}
\label{eq:sum tau-k}
\sum_{n\le z}\tau_\nu(n) =O\(z (\log z)^{\nu-1}\)
\end{equation}
for any real $z \ge 2$,  see~\cite[Equation~(1.80)]{IwKow}, we see that we have the
$$
\T_{p,\nu}(\cK,N) = O\(N(\log N)^{\nu-1}\),
$$
with the implied constant which depends only on $\nu$.

As in the case of $\M_{p}(\cK, N) $, we also give nontrivial bounds on the sums $\T_{p,\nu}(\cK,N)$ under
the condition~\eqref{eq: new range N}. We remark that our treatment of the sums
$\T_{p,\nu}(\cK,N)$ follows the same pattern as for the sums  $\M_{p}(\cK, N) $ but is more
involved in the case of the divisor function since only its average values admit good
bounds (see~\eqref{eq:sum tau-k} or~\eqref{eq:sum tau-k-2} below),
while individual values can be quite large.

In the case of  $\nu=2$ and normilised $s$-dimensional Kloosterman sums $\cK(n) =  p^{-(s-1)/2} K_{s,p}(n)$,  a 
nontrivial bound on  $\T_{p,2}(\cK,N) = O\(N \log N \)$
has recently been given by Kowalski, Michel and Sawin~\cite{KMS2}
under the condition
$$
N  \ge p^{2/3 + \varepsilon}.
$$
Here we extend this range to~\eqref{eq: new range N}.

We also remark, that at least in the case of Kloosterman sums,  once can use the results and 
ideas of  Liu, Shparlinski and Zhang~\cite{LSZ2}  to obtain  a power saving bounds for 
analogues of  $\M_{p}(\cK, N)$ and $\T_{p,\nu}(\cK, N)$ modulo prime powers  $q=p^k$,
 in a much shorter range, namely, for  $N \ge q^{\varepsilon}$.

\section{Our approach and main results}

%

To estimate  $\M_{p}(\cK, N) $ and $\T_{p,\nu}(\cK,N)$  we employ the method of~\cite{Kor2}, also used in~\cite{GJK}.
This is then combined with some  results  of
Fouvry,  Kowalski and Michel~\cite{FKM1},  see Lemma~\ref{lem:sum K compl}
below.

Before we formulate our results we need to recall that  the notations  $F \ll G$ and $F = O(G)$, are equivalent
to $|F|  \le c G)$ for some constant $c>0$,
which throughout the paper may occasionally depend
on the integer parameters  $\nu$ and the conductor of the trace function $\cK$. 

Following~\cite{FKM1}, we say that $\cK(n)$ is an {\it non-exceptional\/} function modulo $p$ ) if it is not proportional to a function
of the form  $\e\left(an/p\right)\chi(n)$ with an integer $a$ and a multiplicative character $\chi$ modulo $p$. 

\begin{theorem}\label{thm:Sum M}
For any fixed real $\varepsilon>0$, if a prime $p$ and an integer $N$
satisfy~\eqref{eq: new range N},  then for any  non-exceptional  isotypic trace function  $\cK$
 associated to some sheaf $\fF$ modulo $p$ of  bounded conductor, 
we have 
$$
\M_{p}(\cK, N)  \ll  \varepsilon^{-1} N  \frac{ \log \log p}{\log  p}.
$$
\end{theorem}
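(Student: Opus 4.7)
The plan is to reduce the M\"obius-twisted sum $\M_p(\cK, N)$ to a family of bilinear sums in the trace function $\cK$ and then bound them via Lemma~\ref{lem:sum K compl}. The key algebraic input is the identity $\mu(qm) = -\mu(m)$ valid whenever $q$ is a prime with $q \nmid m$. I would introduce a parameter $z$, a suitable power of $\log p$, and partition the squarefree integers $n \leq N$ by whether $P^{-}(n) \leq z$. Integers with $P^{-}(n) > z$ form a sparse set of relative density $O(1/\log z)$ and are absorbed trivially in the error term; this is a main source of the $\log\log p/\log p$ saving. For the complementary set, a Vinogradov--Ramar\'e averaging identity rewrites the main contribution as
$$
-\sum_{\substack{q \leq z \\ q \text{ prime}}} \,\sum_{\substack{m \leq N/q \\ q \,\nmid\, m}} \frac{\mu(m)}{1+\omega_z(m)}\, \cK(qm),
$$
where $\omega_z(m)$ counts the prime divisors of $m$ up to $z$. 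A dyadic decomposition in $q$ then converts this into $O(\log z)$ bilinear sums
$$
S_Q \;=\; \sum_{Q < q \leq 2Q}\,\sum_{m \leq N/q} \alpha_q \beta_m\, \cK(qm), \qquad Q \leq z \text{ dyadic},
$$
with $|\alpha_q|, |\beta_m| \leq 1$ and $\alpha_q$ supported on primes.

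Next I would apply the Cauchy--Schwarz inequality to $S_Q$ in the variable $m$, which yields
$$
|S_Q|^2 \;\ll\; \frac{N}{Q} \sum_{Q < q_1, q_2 \leq 2Q} \Bigl|\sum_{m \leq N/Q}\cK(q_1 m)\overline{\cK(q_2 m)}\Bigr|.
$$
The diagonal pairs $q_1 = q_2$ contribute $O(N^2/Q)$ via the trivial bound $|\cK| \ll 1$. For the off-diagonal pairs $q_1 \neq q_2$, the function $m \mapsto \cK(q_1 m)\overline{\cK(q_2 m)}$ is itself a trace function of uniformly bounded conductor which remains non-exceptional, precisely because the hypothesis excludes $\cK$ proportional to $\e(an/p)\chi(n)$. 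Lemma~\ref{lem:sum K compl} then delivers cancellation of size $O(p^{1/2}\log p)$ for each such pair, contributing $O(NQ p^{1/2}\log p)$ in total to the right-hand side.

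Assembling these estimates gives $|S_Q| \ll N Q^{-1/2} + (N Q p^{1/2}\log p)^{1/2}$, and summing over the $O(\log z)$ dyadic scales $Q \leq z$ with $z$ chosen optimally yields the announced bound in the range $N \geq p^{1/2+\varepsilon}$; the factor $\varepsilon^{-1}$ comes from this optimisation. The main technical obstacle is the sheaf-theoretic verification that the shifted--conjugated trace function $m \mapsto \cK(q_1 m)\overline{\cK(q_2 m)}$ is non-exceptional and of uniformly bounded conductor for all distinct primes $q_1, q_2 \leq z$; this relies on the stability properties of isotypic trace sheaves under multiplicative pullback and is precisely what the hypothesis excluding $\e(an/p)\chi(n)$ is designed to secure. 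A secondary concern is the careful balancing of $z$ against the range of $Q$ in order to control the contribution from $z$-rough integers without inflating the dyadic sum of bilinear estimates.
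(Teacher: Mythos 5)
Your overall architecture --- extract a prime factor from each $n$, pass to bilinear sums, apply Cauchy--Schwarz in $m$, and treat the off-diagonal pairs with the correlation bound of Lemma~\ref{lem:sum K compl} --- is exactly the paper's strategy, and your Ramar\'e-type weight $1/(1+\omega_z(m))$ plays the same role as the paper's decomposition into the classes $\cA_r(N;x,y)$ with the factor $1/r$. The gap is in the choice of the window from which the prime $q$ is extracted. You use a single threshold $z=(\log p)^{O(1)}$ and split according to whether $P^{-}(n)\le z$, and this fails on both sides of the split. First, the $z$-rough integers $n\le N$ have density $\asymp 1/\log z\asymp 1/\log\log p$, so bounding them trivially costs $N/\log\log p$, which is far larger than the target $\varepsilon^{-1}N\log\log p/\log p$; your assertion that this set is ``a main source of the $\log\log p/\log p$ saving'' is off by roughly a factor of $\log p/(\log\log p)^{2}$. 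Second, the averaging identity you invoke produces primes $q\le z$ all the way down to $q=2$, and your own bilinear estimate $|S_Q|\ll NQ^{-1/2}+(NQp^{1/2}\log p)^{1/2}$ is trivial for bounded $Q$ because the diagonal term $N^{2}/Q$ dominates; no single choice of $z$ repairs both defects at once.

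The fix, which is what the paper does, is to extract primes only from a two-sided window $\cI=(x,y]$ with $x=(\log p)^{4}$ and $y=p^{\varepsilon/3}$. Then every dyadic scale $Q\in(x,y]$ gives $NQ^{-1/2}\le N(\log p)^{-2}$ and $(NQp^{1/2}\log p)^{1/2}\ll Np^{-\varepsilon/12}$ once $N\ge p^{1/2+\varepsilon}$, while the exceptional set of $n$ having no prime factor in $(x,y]$ is controlled by the Fundamental Lemma of the sieve (Lemma~\ref{lem:ANxy}) as $\ll N\log x/\log y\ll\varepsilon^{-1}N\log\log p/\log p$; this sieve bound, not the set of rough numbers, is the true source of the final estimate and of the factor $\varepsilon^{-1}$. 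Two smaller points: the correlation bound holds only when $q_1/q_2$ avoids a bounded exceptional set $\cE_\fF$, so you must also count and trivially bound the $O(Q)$ off-diagonal pairs with $q_1/q_2\in\cE_\fF$ before applying Corollary~\ref{cor:sum K compl}; and the ``sheaf-theoretic verification'' you single out as the main obstacle is not actually needed, since Lemma~\ref{lem:sum K compl} is quoted ready-made from Fouvry--Kowalski--Michel and is applied directly rather than re-derived.
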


We also have a similar result for $\T_{p,\nu}(\cK,N)$. In this case we however need to add the 
co-primality restriction  $\gcd(a,p)=1$.

\begin{theorem}\label{thm:Sum T}
For any fixed integer $\nu \ge 2$ and real $\varepsilon>0$, if a prime $p$ and an integer $N$
satisfy~\eqref{eq: new range N},  then for any  non-exceptional   isotypic trace function  $\cK$
 associated to some sheaf $\fF$ modulo $p$ of  bounded conductor, 
we have 
$$
\T_{p,\nu}(\cK,N)\ll \varepsilon^{-\nu}N\,\frac{(\log\log{p})^{\nu}}{\log{p}}. 
$$
\end{theorem}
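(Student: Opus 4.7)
I proceed by induction on $\nu$, mirroring the strategy of Theorem~\ref{thm:Sum M} but adapted to handle the larger divisor weights $\tau_\nu$ (which, unlike $|\mu|\le 1$, can only be controlled on average). The key analytic input is Lemma~\ref{lem:sum K compl}, supplying P\'olya--Vinogradov-type cancellation on incomplete sums of $\cK$ over intervals of length $\ge p^{1/2+\varepsilon}$. The base case $\nu=1$ is this lemma applied directly to $\sum_{n\le N}\cK(n)$.

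For the inductive step from $\nu-1$ to $\nu$, I apply the Dirichlet convolution identity $\tau_\nu=1\ast\tau_{\nu-1}$, obtaining
\begin{equation*}
\T_{p,\nu}(\cK,N) \;=\; \sum_{d\le N}\tau_{\nu-1}(d)\sum_{m\le N/d}\cK(dm).
\end{equation*}
Since $\cK_d(m):=\cK(dm)$ is again a non-exceptional isotypic trace function of the same (bounded) conductor, I split the $d$-range at a threshold $U:=Np^{-1/2-\delta}$ with $\delta$ a small positive fraction of $\varepsilon$. For $d\le U$ the inner sum has length $N/d\ge p^{1/2+\delta}$, so Lemma~\ref{lem:sum K compl} bounds it by $O(\sqrt{p}\log p)$; the total contribution of this range is at most $\sqrt{p}\log p\cdot U(\log p)^{\nu-2}\ll N p^{-\delta}(\log p)^{\nu-1}$, which is negligible compared to the target.

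For $d>U$, I reverse the order of summation; the constraint $dm\le N$ combined with $d>U$ forces $m\le N/U=p^{1/2+\delta}$. The resulting inner $d$-sum $\sum_{U<d\le N/m}\tau_{\nu-1}(d)\cK_m(d)$, with $\cK_m(d)=\cK(md)$ again a non-exceptional isotypic trace function, is a difference of two $\T_{p,\nu-1}$-type sums. Provided the upper endpoint $N/m$ exceeds $p^{1/2+\delta'}$ for some slightly smaller $\delta'>0$, the induction hypothesis bounds $|\T_{p,\nu-1}(\cK_m,N/m)|$ by $O((N/m)(\log\log p)^{\nu-1}/\log p)$. Summing over $m\le p^{1/2+\delta}$, the harmonic average $\sum_m 1/m\asymp\log p$ exactly cancels the $1/\log p$ saving, leaving one extra factor of $\log\log p$ and producing the claimed bound of order $(\log\log p)^\nu/\log p$.

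The main technical obstacle is the treatment of the subtracted endpoint contribution $\T_{p,\nu-1}(\cK_m,U)$: its length $U\sim p^{\varepsilon-\delta}$ sits below $p^{1/2}$, so the induction hypothesis does not apply to it directly and a crude trivial bound turns out to be too large. To control it I combine the $\T_{p,\nu-1}(\cK_m,U)$ terms with the small-$d$ range through a further telescoping (or a direct completion argument in the spirit of the M\"obius case in Theorem~\ref{thm:Sum M}), arranging that the resulting error remains subordinate to the main term. A secondary delicate point is the apportionment of the parameter $\varepsilon$ across the $\nu$ inductive layers, each step consuming a fraction of order $\varepsilon/\nu$ and leading to the final prefactor $\varepsilon^{-\nu}$.
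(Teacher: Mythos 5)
There is a genuine gap, and it is structural rather than cosmetic: your decomposition produces only ``Type~I'' information, which cannot reach the range~\eqref{eq: new range N}. Writing $\tau_\nu=1\ast\tau_{\nu-1}$ and splitting at $U=Np^{-1/2-\delta}$, you cover the region $d\le U$ by completing the inner sum over $m$ (length $\ge p^{1/2+\delta}$), and you propose to cover $d>U$ by the induction hypothesis applied to the $d$-sum of length $N/m$. But the induction hypothesis requires $N/m\ge p^{1/2+\delta'}$, i.e.\ $m\le Np^{-1/2-\delta'}$, while the swap forces you to consider all $m\le N/U=p^{1/2+\delta}$. When $N$ is close to $p^{1/2+\varepsilon}$ the leftover region $Np^{-1/2-\delta'}<m\le p^{1/2+\delta}$, $U<d\le N/m$ (both variables of intermediate size, roughly $p^{1/4}$ each) is nonempty and carries a contribution of order $N(\log N)^{\nu-2}\log p$ under the trivial bound; neither completion nor the induction hypothesis applies there, and your ``telescoping/completion'' remark does not supply an estimate for it. This is exactly the obstruction that forces a genuinely bilinear treatment: the paper never convolves $\tau_\nu$, but instead extracts a prime $\ell$ from a short interval $(x,y]$ with $y=p^{\varepsilon/3}$, localizes $\ell$ dyadically, applies Cauchy's inequality in the long variable $m$, and invokes the correlation bound $\sum_{m}\cK(\ell_1 m)\overline{\cK(\ell_2 m)}\ll p^{1/2}\log p$ of Corollary~\ref{cor:sum K compl}. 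Your plan never uses correlations of $\cK$ at all, only single sums, and single-sum completion alone cannot produce savings below length $p^{2/3}$ for divisor-twisted sums (this is why the earlier result of Kowalski--Michel--Sawin stops at $p^{2/3+\varepsilon}$).

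Even in the range where your induction hypothesis does apply, the bookkeeping does not close. You have
\begin{equation*}
\sum_{m\le Np^{-1/2-\delta'}}\bigl|\T_{p,\nu-1}(\cK_m,N/m)\bigr|
\ll \frac{(\log\log p)^{\nu-1}}{\log p}\,N\sum_{m}\frac1m
\asymp N(\log\log p)^{\nu-1},
\end{equation*}
since the harmonic sum is of size $\log p$. This leaves \emph{no} saving over the trivial bound $N(\log N)^{\nu-1}$ beyond replacing $\log N$ by $\log\log p$; there is no mechanism in your argument that regenerates the factor $1/\log p$ claimed in the conclusion, so the inductive step fails quantitatively as well. (The paper does use an induction on $\nu$, but only for the purely arithmetic Lemma~\ref{lem:Sum Div} bounding $\sum\tau_\nu(n)$ over integers free of prime factors below $y$, which controls the $r=0$ term; the analytic saving for $r\ge 1$ comes entirely from the bilinear correlation estimate, not from an induction on the twisted sums themselves.)
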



 \section{Correlations of trace functions}

We    recall the following bound  which is combination of~\cite[Proposition~6.2]{FKM1} 
with~\cite[Theorem~6.3]{FKM1}, see also~\cite{FKM2, FKMRRS,FMRS} for several 
other variations of this result.

\begin{lemma}
\label{lem:sum K compl} 
For any  non-exceptional  isotypic trace function  $\cK$
associated to some sheaf $\fF$ modulo a prime $p$ of    bounded conductor, 
there exists a set $\cE_\fF \subseteq\F_p$  of cardinality $\#  \cE_\fF \ll 1$, 
such that uniformly over $a  \in \F_p \setminus \cE_\fF$ we have 
$$
 \sum_{n=1}^p  \cK(n)  \overline{\cK(an)} \e(hn/p) \ll p^{1/2} .
$$
\end{lemma}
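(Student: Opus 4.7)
The plan is to recognise the correlation sum as the complete trace sum over $\F_p$ of a tensor product sheaf and then invoke Deligne's Riemann hypothesis. If $\cK$ is the trace function attached to the middle-extension sheaf $\fF$, then the summand $\cK(n)\overline{\cK(an)}\e(hn/p)$ is itself the trace function of
$$
\cG_{a,h} \;=\; \fF \;\otimes\; [\times a]^{*}\fF^{\vee} \;\otimes\; \cL_{\psi_h},
$$
where $[\times a]$ denotes multiplication by $a$ on $\mathbb{A}^{1}$ and $\cL_{\psi_h}$ is the Artin--Schreier sheaf attached to the character $x \mapsto \e(hx/p)$. Since the conductor is submultiplicative under tensor products, stable under pullback by non-zero dilations, and since $\cL_{\psi_h}$ contributes only a bounded amount, one has $C(\cG_{a,h}) = O(1)$ uniformly in $a,h$ under our bounded-conductor hypothesis on $\fF$.

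First I would appeal to the standard dichotomy coming from the Grothendieck--Lefschetz trace formula combined with Deligne's theorem: either $\cG_{a,h}$ has a geometrically trivial irreducible component, in which case the trace sum can be as large as $p$, or it has no such component and the sum is bounded by $C(\cG_{a,h})\,p^{1/2} = O(p^{1/2})$. The task therefore reduces to exhibiting a bounded set $\cE_\fF \subseteq \F_p$ outside of which the second alternative holds.

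Next I would pin down $\cE_\fF$ as the set of $a \in \F_p$ for which there exists an $h$ with $\cG_{a,h}$ admitting a geometrically trivial subsheaf. Since $\fF$ is isotypic (hence geometrically isotypic), this condition is equivalent to a geometric isomorphism
$$
[\times a]^{*}\fF \;\cong\; \fF \otimes \cL_{\psi_h}
$$
for some $h \in \F_p$. Thus $\cE_\fF$ is essentially the ``multiplicative stabiliser'' of the geometric isomorphism class of $\fF$ modulo twist by Artin--Schreier sheaves (together with $a=0$).

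The hard part will be to show that $\#\cE_\fF = O(1)$. For a non-exceptional $\fF$ (i.e.\ not geometrically of the form $\cL_\psi \otimes \cL_\chi$ with $\cL_\chi$ a Kummer sheaf), this stabiliser is finite and its cardinality is bounded purely in terms of the conductor of $\fF$, by the rigidity of sheaves of bounded conductor worked out by Katz. Quantitatively, this is exactly the content of~\cite[Proposition~6.2]{FKM1} and~\cite[Theorem~6.3]{FKM1}: the former delivers the cohomological square-root bound conditional on the absence of a trivial subsheaf, while the latter controls the ``bad'' set of $a$ via a geometric analysis of automorphisms of the sheaf. Concatenating the two statements and absorbing $h$ into the uniform estimate yields the asserted bound.
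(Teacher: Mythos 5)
Your proposal is correct and follows essentially the same route as the paper, which proves nothing itself but simply quotes the lemma as the combination of \cite[Proposition~6.2]{FKM1} (the square-root cancellation in the absence of a geometrically trivial component) with \cite[Theorem~6.3]{FKM1} (the boundedness of the exceptional set of correlating dilations for non-exceptional sheaves). Your sketch merely unpacks what those two cited results say and why they concatenate, so there is no substantive difference in approach.
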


Now using a standard reduction between complete and incomplete sums,
see~\cite[Section~12.2]{IwKow}, we immediately derive
a bound  on a sum of products of    $\cK(an)\cK(bn)$, $i=1, \ldots, \nu$,
with $n$ running over an interval $[1, N]$.

\begin{cor}
\label{cor:sum K compl} 
For any  non-exceptional   isotypic trace function  $\cK$
associated to some sheaf $\fF$ modulo a prime $p$  of    bounded conductor, 
there exists a set $\cE_\fF \subseteq\F_p$  of cardinality $\#  \cE_\fF \ll 1$, 
such that uniformly over $a, b \in \F_p$ with $a/b \not \in \F_p \setminus \cE_\fF$  and
an arbitrary integer  $N\le p$ we have
$$
 \sum_{n=1}^N \cK(an)\overline{\cK(bn) } \ll p^{1/2} \log p.
$$
\end{cor}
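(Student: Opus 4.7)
The plan is to apply the standard completion technique (see~\cite[Section~12.2]{IwKow}) to reduce the incomplete sum over $[1,N]$ to complete sums modulo $p$ with an additive twist, which are exactly the objects controlled by Lemma~\ref{lem:sum K compl}. The cases $a=0$ or $b=0$ are handled trivially using the pointwise boundedness of a trace function of bounded conductor, so I may restrict to $a,b \in \F_p^*$.

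First, I would detect the indicator function of the interval $[1,N]$ using additive characters modulo $p$, obtaining the identity
$$
\sum_{n=1}^{N}\cK(an)\overline{\cK(bn)} \;=\; \frac{1}{p}\sum_{h=0}^{p-1}\alpha_h\, S_h,
$$
where
$$
\alpha_h = \sum_{m=1}^{N}\e(-hm/p),\qquad S_h = \sum_{n=1}^{p}\cK(an)\overline{\cK(bn)}\,\e(hn/p).
$$

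Next, I would perform the change of variable $n \mapsto a^{-1}n \pmod p$ in $S_h$, which is legitimate since $a \in \F_p^*$, obtaining
$$
S_h = \sum_{n=1}^{p}\cK(n)\overline{\cK((b/a)n)}\,\e(h a^{-1} n/p).
$$
Under the hypothesis of the corollary, $b/a$ lies outside the exceptional set $\cE_\fF$, so Lemma~\ref{lem:sum K compl} (with the parameter $a$ there replaced by $b/a$ and $h$ by $h a^{-1}$) yields $|S_h| \ll p^{1/2}$, uniformly in $h$.

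Finally, I would invoke the standard geometric-sum bounds $|\alpha_0|\le N\le p$ together with $|\alpha_h| \ll 1/\|h/p\|$ for $h\neq 0$, where $\|\cdot\|$ denotes distance to the nearest integer, so that $\sum_{h=0}^{p-1}|\alpha_h| \ll p\log p$. Combining gives
$$
\left|\sum_{n=1}^{N}\cK(an)\overline{\cK(bn)}\right| \;\ll\; \frac{1}{p}\cdot p^{1/2}\cdot p\log p \;=\; p^{1/2}\log p,
$$
as required. I do not expect any genuine obstacle, since this is a textbook instance of the completion method; the only point meriting a brief sanity check is that the exceptional condition of Lemma~\ref{lem:sum K compl}, being a condition on the ratio of arguments of $\cK$, is preserved under the change of variable and that $\cE_\fF$ may, if necessary, be enlarged by the trivial ratio $1$ to absorb the distinction between the hypotheses on $a/b$ and on $b/a$.
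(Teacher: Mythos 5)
Your proposal is correct and is exactly the argument the paper intends: the paper deduces the corollary from Lemma~\ref{lem:sum K compl} by invoking the standard completion of incomplete sums from~\cite[Section~12.2]{IwKow} without writing out the details, and your write-up (completion identity, change of variable $n\mapsto a^{-1}n$, and the bound $\sum_h|\alpha_h|\ll p\log p$) is precisely that reduction. The only point worth noting is that the printed hypothesis ``$a/b \not\in \F_p \setminus \cE_\fF$'' is evidently a typo for $a/b \in \F_p \setminus \cE_\fF$, which you have implicitly read in the intended way.
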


 \section{Integers avoiding some prime divisors}

Give two real number $y\ge x> 2$ and an integer $N \ge 1$ we denote
by $\cA_0(N;x,y)$ the set  of positive integers $n \le N$ that do not have
a prime divisor in the  half-open interval $(x,y]$. We need the following upper bound
on the cardinality $\# \cA_0(N;x,y)$, 
which follows instantly from the so called {\it Fundamental Lemma\/} of combinatorial sieve,
see, for example,~\cite[Lemma~6.8]{FrIw} or~\cite[Part~I, Theorem~4.4]{Ten}.

\begin{lemma}
\label{lem:ANxy} Uniformly over  integers $N$ and real $x$ and $y$ with  $N \ge y \ge x \ge 2$,  we have
$$
\# \cA_0(N;x,y) \ll N \frac{\log x}{\log y}.
$$
\end{lemma}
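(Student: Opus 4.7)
The plan is to recognise this as a standard application of an upper bound sieve of dimension one, with the sifting primes being exactly the primes lying in the half-open interval $(x,y]$. First I would set up the sieve problem: take the sequence $\cA=\{n\in\Z : 1\le n\le N\}$, so that $X=N$, with local densities $\omega(p)=1$ for every prime (since $\#\{n\le N : p\mid n\} = N/p+O(1)$), giving a sieve of dimension $\kappa=1$. The quantity $\# \cA_0(N;x,y)$ is then precisely the sifting function with sifting set $P=\{p\text{ prime}: x<p\le y\}$.

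Next I would apply the Fundamental Lemma of combinatorial sieve, in the form of \cite[Lemma~6.8]{FrIw} (or equivalently \cite[Part~I, Theorem~4.4]{Ten}), with the sifting parameter $z=y$ and a sieve level $D$ chosen as a small enough positive power of $N$ (for instance $D=N^{1/2}$) so that $s=\log D/\log y\ge 2$, which is permissible since $y\le N$. Since we only need an upper bound, the dependence on $s$ is harmless and the remainder term, bounded by
$$
\sum_{\substack{d\le D\\ d\mid P(y)/P(x)}} 1 \ll D\log D,
$$
is absorbed into the main term. This yields
$$
\#\cA_0(N;x,y)\ \ll\ N\prod_{x<p\le y}\left(1-\frac{1}{p}\right).
$$

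Finally I would simplify the product by Mertens' second theorem,
$$
\prod_{p\le z}\left(1-\frac{1}{p}\right)=\frac{e^{-\gamma}}{\log z}\left(1+O\!\left(\frac{1}{\log z}\right)\right),
$$
valid for $z\ge 2$. Dividing the estimates for $z=y$ and $z=x$ gives
$$
\prod_{x<p\le y}\left(1-\frac{1}{p}\right) = \frac{\log x}{\log y}\left(1+O\!\left(\frac{1}{\log x}\right)\right) \ll \frac{\log x}{\log y},
$$
and combining with the sieve bound above delivers the conclusion.

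The only real point to check is the applicability of the Fundamental Lemma, which rests on verifying the linear sieve dimension hypothesis for the density function $\omega\equiv 1$; this is standard and is the reason Mertens' theorem correctly describes the main term. I do not anticipate any genuine obstacle — the statement is essentially a direct packaging of the Fundamental Lemma with Mertens' estimate, and the authors' remark that the bound \emph{follows instantly} from the cited Fundamental Lemma is accurate.
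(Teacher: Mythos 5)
Your overall strategy coincides with what the paper intends (the paper offers no written proof, only the citation of the Fundamental Lemma), and the set-up --- sifting set $\{p:\ x<p\le y\}$, dimension $\kappa=1$ via $\omega(p)=1$, and the Mertens evaluation $\prod_{x<p\le y}(1-1/p)\asymp \log x/\log y$ --- is all correct. The one step that fails as written is the choice of the level: you take $D=N^{1/2}$ and assert that $s=\log D/\log y\ge 2$ ``since $y\le N$''. The inequality goes the wrong way: $y\le N$ only guarantees $s\ge \tfrac12$, and the lemma is claimed uniformly for all $y\le N$, with $y$ possibly a fixed power of $N$ (indeed the paper later uses $y=p^{\varepsilon/3}$ with $N\ge p^{1/2+\varepsilon}$, and also invokes the case $y$ close to $N$ implicitly through the uniform statement). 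In that regime no admissible level exists for your argument: forcing $s\ge 2$ requires $D\ge y^2$, which can exceed $N$, and then the remainder term $O(D)$ swamps the main term $N\log x/\log y\asymp N\log x/\log N$. So as stated the Fundamental Lemma cannot be applied directly with $z=y$.

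The standard repair is a preliminary truncation of the sifting range. If $x>N^{1/20}$ the bound is weaker than the trivial estimate $\#\cA_0(N;x,y)\le N\ll N\log x/\log y$, so there is nothing to prove. Otherwise set $y_0=\min\{y,N^{1/20}\}\ge x$; since $(x,y_0]\subseteq(x,y]$, an integer free of prime factors in $(x,y]$ is a fortiori free of prime factors in $(x,y_0]$, whence $\#\cA_0(N;x,y)\le \#\cA_0(N;x,y_0)$. Now apply the Fundamental Lemma with $z=y_0$ and $D=N^{1/2}$, so that $s=\log D/\log y_0\ge 10$, the remainder $O(D)$ is negligible against $N\log x/\log y_0\gg N/\log N$, and one gets $\#\cA_0(N;x,y_0)\ll N\log x/\log y_0\ll N\log x/\log y$ because $\log y_0\ge \tfrac1{20}\log y$ for $y\le N$. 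With this modification (and noting that the remainder is simply $\le D$, not that this matters) your proof is complete and is essentially the argument the cited references supply.
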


For $N \ge y \ge 2$, we use $\Psi(N,y)$ to denote  the set number of integers $n\le N$
whose prime divisors are at most  $y$.

The following bound on the cardinality $\# \Psi(N,y)$ well-known,   see, for example,~\cite[Part~III,
Theorem~5.1]{Ten}.

\begin{lemma}
\label{lem: bound Psi}
For any $N$ and $y$  with $N \ge y \ge 2$,  we have
$$
\# \Psi(N,y) \ll\ N\exp{\(-\,\frac{\log{N}}{2\log{y}}\)}.
$$
\end{lemma}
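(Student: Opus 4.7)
The plan is to establish this via Rankin's trick, the standard elementary device for bounding the count of $y$-smooth integers. For any $\sigma \in (0,1)$, the pointwise inequality $\mathbf{1}_{\{n \le N\}} \le (N/n)^{\sigma}$ combined with the Euler factorization gives
$$
\#\Psi(N,y) \;\le\; N^{\sigma}\sum_{\substack{n \ge 1 \\ P^{+}(n) \le y}} n^{-\sigma} \;=\; N^{\sigma}\prod_{p \le y}(1 - p^{-\sigma})^{-1},
$$
where $P^{+}(n)$ denotes the largest prime factor of $n$, and the remaining task is to choose $\sigma$ cleverly and to control the Euler product.

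The first key step is to take $\sigma = 1 - 1/\log y$, so that $N^{\sigma} = N\exp(-\log N/\log y)$. The exponent here is \emph{twice} the saving demanded by the lemma, so the surplus factor $\exp(-\log N/(2\log y))$ is what will be spent on absorbing the Euler product. With this choice, $p^{-\sigma} = p^{-1}\cdot p^{1/\log y} \le e/p$ for every $p \le y$, so $p^{-\sigma}$ stays bounded away from $1$ and one has $-\log(1-p^{-\sigma}) \ll p^{-\sigma}$.

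The second step is to estimate the Euler product by Mertens' theorem $\sum_{p \le y} 1/p = \log\log y + O(1)$, which yields
$$
\log\prod_{p \le y}(1-p^{-\sigma})^{-1} \;\ll\; \sum_{p \le y} p^{-\sigma} \;\ll\; \log\log y.
$$
Combining the two steps, $\#\Psi(N,y) \ll N(\log y)^{C}\exp(-\log N/\log y)$ for some absolute $C$, and this already implies the lemma whenever $u := \log N/\log y$ is at least $2C\log\log y$, the surplus $\exp(-u/2)$ then swallowing the $(\log y)^{C}$ factor.

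The main obstacle is the complementary range $u \ll \log\log y$, in which $N \le y^{O(\log\log y)}$ and the target bound $N\exp(-u/2)$ is only of order $N\cdot(\log y)^{-O(1)}$. Here Rankin's estimate with the above choice of $\sigma$ is not by itself strong enough, and I would close the gap by a separate, essentially elementary argument: either invoke the classical asymptotic $\#\Psi(N,y) \sim N\rho(u)$ in terms of the Dickman function, combined with the bound $\rho(u) \le C'e^{-u/2}$ valid for all $u \ge 1$ with some absolute $C'$, or run a direct small sieve removing integers divisible by some prime in $(y,\sqrt{N}]$ to trim $\#\Psi(N,y)$ below the required threshold. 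Stitching the two regimes together produces the stated inequality with an absolute implied constant.
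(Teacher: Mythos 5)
There is a genuine gap here, and it helps to note first that the paper does not actually prove this lemma: it is quoted verbatim from Tenenbaum (Part~III, Theorem~5.1), where it is established by Rankin's method. So your overall strategy is the standard one, and your first regime is handled correctly: with $\sigma=1-1/\log y$ the Rankin/Euler-product computation does give $\#\Psi(N,y)\ll N(\log y)^{C}e^{-u}$ with $u:=\log N/\log y$, which implies the lemma once $u\ge 2C\log\log y$. (A small caveat: for $2\le y\le e$ your $\sigma$ is nonpositive and the inequality $\mathbf{1}_{\{n\le N\}}\le (N/n)^{\sigma}$ fails, so bounded $y$ must be disposed of separately; this is harmless since $\#\Psi(N,y)\ll(\log N)^{O(1)}$ there.)

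The gap is in the complementary range $u\ll\log\log y$, where you offer two alternatives, and the second one does not work. Sieving out the integers $n\le N$ divisible by some prime in $(y,\sqrt{N}]$ is exactly the fundamental-lemma bound of Lemma~\ref{lem:ANxy}, and it yields only $\#\Psi(N,y)\ll N\log y/\log N= N/u$. But $1/u$ is not $O(e^{-u/2})$ once $u$ exceeds an absolute constant: at the top of the problematic range, $u\asymp\log\log y$, the sieve gives $N/\log\log y$ where the lemma demands $N(\log y)^{-c}$, so this route is off by a factor of roughly $(\log y)^{c}/\log\log y$ and cannot close the gap. Your first alternative --- $\#\Psi(N,y)\ll N\rho(u)$ together with $\rho(u)\le 1/\Gamma(u+1)\le C'e^{-u/2}$ --- is valid, since the de Bruijn--Hildebrand uniformity range for the Dickman asymptotic comfortably contains $u\ll\log\log y$; but it is far from ``essentially elementary'' (it invokes a theorem much deeper than the lemma), and as written you have neither committed to it nor checked the required uniformity. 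To make the proof self-contained in the middle range one would instead either optimise $\sigma=1-\xi/\log y$ with $\xi\asymp\log(u+2)$ depending on $u$, or run the standard Buchstab/induction argument; as it stands, the proposal is incomplete there.
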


 \section{Sums of the divisor function over integers without small prime divisors}

For $N \ge y \ge 2$, we use $\Phi(N,y)$ to denote the set containing $1$ and all intergers $n\le N$
whose prime divisors are at least  $y$.

Suppose that $N \ge y \ge 2$. For an  integer $\nu \ge 1$, we set
$$
S_{\nu}(N;y)= \sum_{n \in \Phi(N,y)}\tau_{\nu}(n).
$$

Clearly $\tau_1(n) =1$ and so
\begin{equation}
\label{eq: Bound S1}
S_{1}(N,y) = \#  \Phi(N,y) \ll  \frac{N}{\log y},
\end{equation}
see, for example,~\cite[Part~III. Theorem~6.4]{Ten}.

\begin{lemma}
\label{lem:Sum Div}
For a fixed integer $\nu\ge 2$, for any $N$ and $y$  with $N \ge y \ge 2$,   we have
$$
S_{\nu}(N;y) \ll \frac{N (\log N)^{\nu-1}}{(\log y)^{\nu}}.
$$
\end{lemma}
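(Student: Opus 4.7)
The plan is to proceed by induction on $\nu \geq 1$, using the recursion $\tau_\nu = \tau_{\nu-1} * 1$ and the observation that if $n$ has all prime factors $\geq y$, then so does every divisor of $n$.

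First I would nail down a single, uniform bound on $\#\Phi(z,y)$: for $z < y$ only $n=1$ qualifies, while for $z \geq y$ the bound~\eqref{eq: Bound S1} (that is, $\#\Phi(z,y) \ll z/\log y$) holds, so one can work with $\#\Phi(z,y) \ll 1 + z/\log y$. A short companion estimate I will need is
\[
 \sum_{d \in \Phi(N,y)} \frac{1}{d} \ll \frac{\log N}{\log y},
\]
which follows by partial summation from the bound on $\#\Phi(t,y)$ together with $N \geq y$. This will be the "harmonic" analogue of~\eqref{eq: Bound S1} that drives the induction.

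Next, using $\tau_\nu = \tau_{\nu-1} * 1$ and the closure of $\Phi(N,y)$ under taking divisors, I would write
\[
 S_\nu(N;y) \;=\; \sum_{n \in \Phi(N,y)}\sum_{d\mid n}\tau_{\nu-1}(n/d)
 \;=\; \sum_{d \in \Phi(N,y)} S_{\nu-1}(N/d,\,y).
\]
Split the outer sum at $d = N/y$. For $d > N/y$ we have $N/d < y$, so $\Phi(N/d,y)=\{1\}$ and the contribution is at most $\#\Phi(N,y) \ll N/\log y$. For $d \leq N/y$ the inductive hypothesis gives $S_{\nu-1}(N/d,y) \ll (N/d)(\log N)^{\nu-2}/(\log y)^{\nu-1}$, and pulling out the factor of $N(\log N)^{\nu-2}/(\log y)^{\nu-1}$ leaves exactly the harmonic sum above. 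Multiplying the two gives $N(\log N)^{\nu-1}/(\log y)^\nu$, which dominates the $N/\log y$ term since $N\ge y$ and $\nu\ge 2$.

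The main technical point, and the step I expect to be the most delicate, is the harmonic bound $\sum_{d \in \Phi(N,y)} 1/d \ll \log N /\log y$; everything else is bookkeeping. One must verify this carefully via partial summation from the sieve estimate on $\#\Phi(t,y)$ (Lemma~\ref{lem:ANxy} applied with $x=y$, or the reference cited for~\eqref{eq: Bound S1}), and confirm that the constant from $1/y$ below and the boundary term at $t=N$ do not spoil the bound in the regime $y \le t \le N$. Once this estimate is in hand, the induction closes in one line and the base case $\nu=1$ is precisely~\eqref{eq: Bound S1}.
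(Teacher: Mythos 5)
Your proof is correct and follows essentially the same route as the paper: induction on $\nu$ via the convolution identity $S_{\nu}(N;y)=\sum_{d\in\Phi(N,y)}S_{\nu-1}(N/d;y)$, with the base case~\eqref{eq: Bound S1} and the harmonic-sum estimate $\sum_{d\in\Phi(Z,y)}1/d\ll(\log Z)/(\log y)$ obtained by partial summation. The only (harmless) difference is that you run the harmonic sum over all of $\Phi(N,y)$ and thus treat the whole range $2\le y\le N$ uniformly, whereas the paper first disposes of the case $y>\sqrt{N}$ separately via the prime number theorem so that its harmonic sum, taken over $\Phi(N/y,y)$, is applied only when $N/y\ge y$.
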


\begin{proof} If $\sqrt{N}<y\le N$ then the desired estimate follows from prime number theorem, since in this case one has
$$
S_{\nu}(N;y) = \sum\limits_{\sqrt{N}<p\le N}\tau_{\nu}(p)\ll \frac{N}{\log{N}} \ll N \frac{(\log{N})^{\nu-1}}{(\log{y})^{\nu}}.
$$
Thus, in the below we assume that $2\le y\le \sqrt{N}$. 

We now establish the bound by induction on $\nu$.
The bound~\eqref{eq: Bound S1} provides the basis of induction
for $\nu=1$.

Now assume that the result holds for $S_{\nu-1}(N;y)$. We now derive it for
$\nu \ge 2$. First, we write
\begin{align*}
S_{\nu}(N;y)   & = \sum_{d\in \Phi(N,y)} \sum_{m\in \Phi(N/d,y)} \tau_{\nu-1}(m)
=  \sum_{d\in \Phi(N,y)} S_{\nu-1}(N/d;y) \\
& \le  \sum_{d\in \Phi(N/y,y)} S_{\nu-1}(N/d;y) +  S_{\nu-1}(N;y).
\end{align*}
Now, by the induction assumption, we obtain
\begin{align*}
S_{\nu}(N;y) & \le  \sum_{d\in \Phi(N/y,y)}  \frac{N (\log (N/d))^{k-2}}{d (\log y)^{k-1}} +   \frac{N (\log N)^{\nu-2}}{(\log y)^{k-1}}\\
& \le \frac{N (\log N)^{\nu-2}}{d (\log y)^{k-1}}   \sum_{d\in \Phi(N/y,y)}  \frac{1}{d} +   \frac{N (\log N)^{\nu-2}}{(\log y)^{k-1}}.
\end{align*}
Now, using~\eqref{eq: Bound S1}  by partial summation one easily derives that for any $Z\ge 1$
$$
 \sum_{d\in \Phi(Z,y)}  \frac{1}{d} \le  \frac{ \log Z}{ \log y}
$$
and the result follows.
\end{proof}


\section{Proof of Theorem~\ref{thm:Sum M}}

We remark that we can assume that $\gcd(a,p)=1$ as otherwise the result 
(in a much stronger form) 
follows from classical bound of Walfisz~\cite[Chapter~V, Section~5, Equation~(12)]{Wa} on sums of M{\"o}bius function. 

We can certainly assume that
$$
N < p
$$
as for larger values of $N$ the result of
Fouvry,  Kowalski and Michel~\cite[Theorem~1.7]{FKM1}
is stronger.

We fix some real $x$ and $y$ and for an integer $r \ge 0$ we
denote by $\cA_r(N;x,y)$ the set of positive integers $n \le N$
which have exactly $r$ prime divisors (counted with multiplicities) in the
 half-open interval $\cI = (x,y]$.

In particular,  the cardinality of $\cA_0(N;x,y)$ has been  estimated in  Lemma~\ref{lem:ANxy}.
Let $R$ be the largest value of $r$ for which $\cA_r(N;x,y) \ne \emptyset$.  In particular, we have the
trivial bound
\begin{equation}
\label{eq: R triv}
R \ll \log N.
\end{equation}

We now write
\begin{equation}
\label{eq: M Sr}
\M_{p}(\cK, N)  = \sum_{r =0}^R  U_r,
\end{equation}
where
$$
U_r = \sum_{n\in \cA_r(N;x,y)}\mu(n)\cK(n).
$$

We note that any trace  trace function $\cK(n)$ of the type we consider  is bounded pointwise by its conductor,
that is for a trace function $\cK(n)$ of bounded conductor we have 
\begin{equation}
\label{eq:Bound-B}
\cK(n) \ll 1. 
\end{equation}
In particular using~\eqref{eq:Bound-B} and estimating $A_0$ trivially as
$$
U_0 \ll \# \cA_0(N;x,y)
$$
and using Lemma~\ref{lem:ANxy}, we obtain
\begin{equation}
\label{eq: M Ur U0}
\M_{p}(\cK, N)  \ll  \sum_{r =1}^R  |U_r| +  N \frac{\log x}{\log y}.
\end{equation}

Clearly, every square-free integer $n \in \cA_r(N;x,y)$, which is not divisible by $\ell^2$
for a prime $\ell \in \cI$ has exactly $r$ representations as $n = \ell m$
with a prime $\ell \in \cI$  and integer $m \in \cA_{r-1}(N/\ell;x,y)$.
Trivially, the number of $n \le N$, which are  divisible by a square
of a prime $\ell \in \cI$, is at most
$$
\sum_{\ell \in \cI} \frac{N}{\ell^2} \le N \sum_{h\ge x} \frac{1}{h^2} \le N/x.
$$
%

Hence, for $r =1, \ldots, R$,
$$
U_r = \frac{1}{r} \sum_{\ell \in \cI}  \sum_{\substack{m\in \cA_{r-1}(N/\ell;x,y)\\ \gcd(\ell,m)=1}}\mu(\ell m)\cK(\ell m)  + O(N/x),
$$
where throughout the proof, $\ell$ always denotes a prime number.

Changing the order of summation and using the multiplicativity of the M{\"o}bius function, we now write
\begin{equation}
\label{eq:UrVr-1}
U_r = \frac{1}{r} V_r+ O(N/x),
\end{equation}
where, using $\mu(\ell)=-1$ we have
\begin{align*}
V_r  & =  \sum_{m\in \cA_{r-1}(N/x ;x,y)}   \mu(m)
 \sum_{\substack{\ell \in \cI\cap[1, N/m]\\ \gcd(\ell,m)=1}}
\mu(\ell)\cK(\ell m) \\
& =  -  \sum_{m\in \cA_{r-1}(N/x ;x,y)}   \mu(m)
 \sum_{\substack{\ell \in \cI\cap[1, N/m]\\ \gcd(\ell,m)=1}}
 \cK(\ell m)  .
\end{align*}

Let us define the integer  $K$ by the inequality
\begin{equation}
\label{eq: K def}
x2^K \le \fl{y}+1 < x2^{K+1}.
\end{equation}

We now partition the interval $\cI$ into at most $K+1 = O(\log y)$
intervals 
\begin{equation}
\label{eq: Ik}
\cI_k = (x_k,y_k]
\end{equation}
with
$$x_k = 2^{k} \rf{x} \mand y_k= \min\{2x_k, \fl{y}\},
\qquad k =0, \ldots, K.
$$

Thus
\begin{equation}
\label{eq: TrTkr}
\left| V_r\right|\le \sum_{k=0}^{K}  \left|V_{k,r}\right|,
\end{equation}
where
$$
V_{k,r} =  \sum_{m\in \cA_{r-1}(N/x_k ;x,y)}   \mu(m)
 \sum_{\substack{\ell \in \cI_k\cap[1, N/m]\\ \gcd(\ell,m)=1}}
\cK(\ell m) .
$$

 Clearly for each $m\in \cA_{r-1}(N/x_k  ;x,y)$, there are at most  $r-1$ primes $\ell \in \cI$
with $\gcd(\ell,m)> 1$.
Hence,  using~\eqref{eq:Bound-B}, at the cost of the error term $O(r N/x_k)$
we can  discard the co-primality condition $\gcd(\ell,m)=1$ and   write
\begin{equation}
\label{eq: VkrWkr-1}
V_{k,r} \ll   W_{k,r} + r N/x_k,
\end{equation}
where
$$
W_{k,r}=  \sum_{m\in \cA_{r-1}(N/x_k ;x,y)}   \left|
 \sum_{\ell \in \cI_k\cap[1, N/m]} \cK(\ell m)\right|.
$$
Now, by the Cauchy inequality, we obtain
\begin{equation}
\begin{split}
\label{eq:Wkr2}
W_{k,r}^2 \le  \# \cA_{r-1}&(N/x_k ;x,y)  \\
& \sum_{m\in \cA_{r-1}(N/x_k ;x,y)}  
\left|\sum_{\ell \in \cI_k\cap[1, N/m]} \cK(\ell m) \right|^2.
\end{split}
\end{equation}
We now  also use the trivial estimate
$$
\# \cA_{r-1}(N/x_k ;x,y)  \le N/x_k
$$
(more precise estimates are available but do not improve the final result)
and extend the summation to all positive integer $m \le N/x_k$, which yields
$$
W_{k,r}^2     \le Nx_k^{-1}  \sum_{m \le N/x_k }  
\left|\sum_{\ell \in \cI_k\cap[1, N/m]} \cK(\ell m) \right|^2.
$$
Finally, squaring out and changing the order of summation, we arrive to
\begin{equation}
\label{eq:Ukr2-1}
W_{k,r}^2     \le Nx_k^{-1}   \sum_{ \ell_1, \ell_2\in \cI_k}\,   \sum_{m \le N/\max\{x_k, \ell_1, \ell_2\}}  
 \cK( \ell_1 m)  \overline{\cK(\ell_2 m)}. 
\end{equation}
For at most $O(y_k)$ pairs $(\ell_1,\ell_2)$ with
$\ell_1 /\ell_2\in \cE_\fF$, where the set $\cE_\fF$ is as in 
Lemma~\ref{lem:sum K compl},  we estimate the inner sum trivially as $Nx_k^{-1}$,
For the remaining $O(y_k^2)$ pairs $(\ell_1,\ell_2)$ we recall that $N \le p$ and
apply  Corollary~\ref{cor:sum K compl}  with  $\nu =2$.
Therefore,
\begin{equation}
\begin{split}
\label{eq:Ukr2-2}
W_{k,r}^2  &   \ll Nx_k^{-1} \(y_k  Nx_k^{-1}    + y_k^2 p^{1/2}\log p\) \\
&   \ll  N^2   x_k^{-1}  + Np^{1/2}x_k \log p.
\end{split}
\end{equation}
Recalling the definition $x_k = 2^k \rf{x}$ we obtain
\begin{equation}
\label{eq: Ukr bound}
W_{k,r}   \ll  N  x^{-1/2} 2^{-k/2} + N^{1/2} p^{1/4}  x^{1/2}  2^{k/2}  (\log p)^{1/2}.
\end{equation}
Substituting~\eqref{eq: Ukr bound}  in~\eqref{eq: VkrWkr-1}, we obtain
$$
V_{k,r}   \ll  N x^{-1/2} 2^{-k/2} + N^{1/2} p^{1/4}  x^{1/2}  2^{k/2}  (\log p)^{1/2}+  r N x^{-1} 2^{-k}.
$$
Thus, substituting this bound in~\eqref{eq: TrTkr} we derive
\begin{align*}
V_r& \ll \sum_{k=0}^{K}
 \( N x^{-1/2} 2^{-k/2}   +N^{1/2} p^{1/4}  x^{1/2}  2^{k/2}  (\log p)^{1/2}+ r N x^{-1} 2^{-k}\)\\
 & \ll  N   x^{-1/2} + N^{1/2}  p^{1/4} x^{1/2}  2^{K/2}  (\log p)^{1/2} +  r N/ x .
\end{align*}
Therefore, recalling the definition of $K$ given by~\eqref{eq: K def}, we obtain
\begin{equation}
\label{eq: Vr bound}
V_r  \ll  N   x^{-1/2} + N^{1/2}  p^{1/4} y^{1/2}(\log p)^{1/2}+   r N /x .
\end{equation}
Using the bound~\eqref{eq: Vr bound} in~\eqref{eq:UrVr-1} and then in~\eqref{eq: M Ur U0} we see that
 \begin{align*}
&\M_{p}(\cK, N) \\
 & \quad  \ll  \sum_{r =1}^R  \frac{1}{r}\( N   x^{-1/2} + N^{1/2}  p^{1/4} y^{1/2}(\log p)^{1/2}+ r N/ x\) +  N \frac{\log x}{\log y}\\
&\quad  \ll  N  x^{-1/2}  \log R + N^{1/2}  p^{1/4} y^{1/2}(\log p)^{1/2} \log R +  N R/ x+ N \frac{\log x}{\log y} .
\end{align*}
We now choose $x = (\log p)^4$, $y = p^{\varepsilon/3}$ and   recall that by~\eqref{eq: R triv}
we have $\log R \ll\log \log p$.
 The result now follows.

\section{Proof of Theorem~\ref{thm:Sum T}}

We fix some real $x$ and $y$ and define the sets  $\cA_r(N;x,y)$ and the integer $R$
as in the proof of Theorem~\ref{thm:Sum M}. In particular, the bound~\eqref{eq: R triv}
still holds.

Now, instead of~\eqref{eq: M Sr}, we have
$$
\T_{p,\nu}(\cK,N)= \sum_{r =0}^R  U_r,
$$
where we now define
$$
U_{r}=\sum_{n\in \cA_r(N;x,y)}\tau_\nu(n)\cK(n).
$$

As in the the proof of  Theorem~\ref{thm:Sum M}, we  estimate the sums $U_{0}$ and $U_{r}$ with $r\ge 1$, separately.
We note that
now estimating $U_0$ takes slightly more care than the corresponding bound on $U_0$ in
the proof of  Theorem~\ref{thm:Sum M}.

We observe that any $n\in \cA_0(N;x,y)$ can be uniquely expressed in the form
$n = uv$ or $n = u$ where $u\in \Psi(n,x)$ has no prime divisors greater than $x$ and
$v \in\Phi^*(n,y)$, where
$$
\Phi^*(n,y) = \Phi(n,y) \setminus \{1\}.
$$
Recalling~\eqref{eq:Bound-B}, we see that
$$
|U_{0}| \ll U_{0,1}+U_{0,2},   
$$
where
$$
U_{0,1} = 2\sum_{u\in \Psi(N,x)}\tau_\nu(u) \mand U_{0,2}= \sum_{\substack{u\in \Psi(N,x)\\ v \in\Phi^*(N,y) \\ uv\le N}}
\tau_\nu(uv).
$$

We estimate $U_{0,1}$ rather crudely. Namely, using the bound
\begin{equation}
\label{eq:sum tau-k-2}
\sum_{n\le z}\tau_\nu^{2}(n) \ll z (\log z)^{\nu^2-1},
\end{equation}
which holds for anty real $z \ge 2$,
see~\cite[Equation~(1.80)]{IwKow}, by the Cauchy inequality and
Lemma~\ref{lem: bound Psi} we see that
\begin{equation}
\begin{split}
\label{eq:U01}
U_{0,1} & \le\( \# \Psi(N,x)\sum_{n\le N}\tau_\nu^{2}(n)\)^{1/2} \\
& \ll N\exp{\(-\,\frac{\log{N}}{4\log{x}}\)}(\log{N})^{(\nu^2-1)/2}.
\end{split}
\end{equation}

Next, recalling the multiplicativity of $\tau(n)$ and using the notations and the bound of
Lemma~\ref{lem:Sum Div}, we find
\begin{align*}
U_{0,2} & \le\sum_{u\in \Psi(N/y,x)}\tau_\nu(u)\sum_{v \in\Phi^*(N/u,y)}\tau_\nu(v) \le\sum_{u\in \Psi(N/y,x)}
\tau_\nu(u) S_\nu\(Nu^{-1};y\) \\
&\ll \frac{N (\log N)^{\nu-1}}{(\log y)^{\nu}}\sum_{u\le N/y}\frac{\tau_\nu(u)}{u} \\
& \ll  \frac{N (\log N)^{\nu-1}}{(\log y)^{\nu}}\prod_{\substack{\ell \le x\\\ell~\text{prime}}} \sum_{j=0}^\infty \frac{1}{\ell^j} \binom{j+\nu-1}{j}. 
\end{align*}
Clearly
$$
 \sum_{j=0}^\infty \frac{1}{\ell^j} \binom{j+\nu-1}{j} = \(\sum_{i=0}^\infty \frac{1}{\ell^i} \)^\nu
 = \(1-\frac{1}{\ell}\)^{-\nu}.
 $$
 Hence, by the Mertens theorem,  see~\cite[Equation~(2.16)]{IwKow}
\begin{equation}
\begin{split}
\label{eq:U02}
U_{0,2} & \ll  \frac{N (\log N)^{\nu-1}}{(\log y)^{\nu}}\prod_{\substack{\ell \le x\\\ell~\text{prime}}} \(1-\frac{1}{\ell}\)^{-\nu}\\
& \ll \frac{N (\log N)^{\nu-1}(\log x)^{\nu}}{(\log y)^{\nu}}. 
\end{split}
\end{equation}

Therefore, combining~\eqref{eq:U01} and~\eqref{eq:U02}, we see that
instead of~\eqref{eq: M Ur U0} we now have
\begin{equation}
\begin{split}
\label{eq: T Ur U0}
\T_{p,\nu}(\cK,N)&\ll \sum_{r =1}^R  |U_r| +   N\exp{\(-\,\frac{\log{N}}{4\log{x}}\)}(\log{N})^{(\nu^2-1)/2}\\
&  \qquad  \qquad  \qquad +\frac{N (\log N)^{\nu-1}(\log x)^{\nu}}{(\log y)^{\nu}}.
\end{split}
\end{equation}

Now, let $R \ge r\ge 1$ so that $\cA_r(N;x,y)\ne \emptyset$. Then writing $n = \ell m\in \cA_r(N;x,y)$  with a prime $\ell \in \cI$  and integer $m \in \cA_{r}(N/\ell;x,y)$ we have
\begin{equation}
\label{eq: T Ur Er}
U_{r} = \frac{1}r V_{r}+ E_r, 
\end{equation}
where
$$
V_{r}= \sum_{x<\ell\le y}\,\sum_{\substack{m\in \cA_{r-1}(N/\ell;x,y) \\
 \gcd(m,\ell)=1}}\tau_\nu(\ell m)\cK(\ell m), 
 $$
and $E_r$ is the contribution of $m$ divisible by
$\ell $ for some prime $\ell\in \cI$ and thus $U_{r,2}$ is the contribution
from other $n$.
We estimate $E_r$ rather crudely. Namely, using the bound~\eqref{eq:sum tau-k}. we obtain
\begin{align*}
|E_{r}| &\le 2\sum_{x<\ell\le y}\sum_{\substack{m \le N/\ell\\\ell \mid m}}\tau_\nu(\ell m)
\le 2\sum_{x<\ell\le y}\sum_ {m \le N/\ell^2}\tau_\nu(\ell^{2}m) \\
&  \le 2\sum_{x<\ell\le y}\tau_\nu(\ell^{2})\sum_{m\le N/\ell^{2}}\tau_\nu(m) \ll N(\log N)^{\nu-1}\sum_{\ell>x}\frac{1}{\ell^{2}}\\
& \ll  Nx^{-1}(\log N)^{\nu-1} .
\end{align*}
Therefore, we can rewrite~\eqref{eq: T Ur Er}
as
\begin{equation}
\label{eq:UrVr-2}
U_{r} = \frac{1}{r}   V_{r}+ O\(  Nx^{-1}(\log N)^{\nu-1}\)
\end{equation}
which is an analogue of~\eqref{eq:UrVr-1}.

Furthermore, any $n\in \cA_r(N;x,y)$ from the sum $U_{r,1}$ has exactly $r$ representations of the form $n = \ell m$ where $\ell\in \cI$
and $m\in \cA_{r-1}(N;x,y)$ with $\gcd(m,\ell)=1$. Therefore, using $\tau_\nu(\ell) =\nu$, we obtain
\begin{align*}
V_{r}=\frac{1}{r}\sum_{x<\ell\le y}\,\sum_{\substack{m\in \cA_{r-1}(N/\ell;x,y) \\
 \gcd(m,\ell)=1}}\tau_\nu(\ell m)\cK(\ell m)\\
= \frac{\nu}{r}\sum_{x<\ell\le y}\,\sum_{\substack{m\in \cA_{r-1}(N/\ell;x,y) \\ \gcd(m,\ell)=1}}\tau_\nu(m)\cK(\ell m).
\end{align*}
Thus, defining the intervals $\cI_k$, $k = 0, \ldots, K$ as in~\eqref{eq: Ik}, we have the inequality
$$
\left| V_r\right|\le \sum_{k=0}^{K}  \left|V_{k,r}\right|,
$$
where
\begin{align*}
V_{k,r}&=    \sum_{m\in \cA_{r-1}(N/x_k ;x,y)}   \tau_\nu(m)
 \sum_{\substack{\ell \in \cI_k\cap[1, N/m]\\ \gcd(\ell,m)=1}}
\cK(\ell m) .
\end{align*}

If we drop the condition $\gcd(m,\ell)=1$ if the above inner sum then as in~\eqref{eq: VkrWkr-1}
we see that it introduces an  error that by  absolute value does not exceed the quantity
$$
2(r-1)    \sum_{m\in \cA_{r-1}(N/x_k ;x,y)}   \tau_\nu(m) \ll r\frac{N(\log N)^{\nu-1}}{x_k}.
$$
Thus, we have the following analogue of~\eqref{eq: VkrWkr-1}
\begin{equation}
\label{eq: VkrWkr-2}
V_{k,r}= W_{k,r} + O\(r\frac{N(\log N)^{\nu-1}}{x_k}\),
\end{equation}
where
$$
W_{k,r}=  \sum_{m\in \cA_{r-1}(N/x_k ;x,y)}  \tau_\nu(m) \left|
 \sum_{\ell \in \cI_k\cap[1, N/m]} \cK(\ell m)\right|.
$$
Now, by the Cauchy inequality and also recalling~\eqref{eq:sum tau-k-2}, we obtain
$$
W_{k,r}^2 \le   \frac{N (\log{N})^{\nu^2-1} }{x_{k}}   
  \quad  \sum_{m\in \cA_{r-1}(N/x_k ;x,y)} \left|
 \sum_{\ell \in \cI_k\cap[1, N/m]} \cK(\ell m)\right|^2 
$$
instead of~\eqref{eq:Wkr2}. We now proceed exactly as in the proof of Theorem~\ref{thm:Sum M}
except that we have an extra factor of  $(\log{N})^{\nu^2-1}$ in analogues of the bounds~\eqref{eq:Ukr2-1},
\eqref{eq:Ukr2-2} and~\eqref{eq: Ukr bound}.  Hence recalling~\eqref{eq: VkrWkr-2}
  and using very crude estimates
$$
\nu -1 < (\nu^2-1)/2 < \nu^2-1 < \nu^2 
$$
(as using more accurate bounds does not change the final result), 
instead of~\eqref{eq: Vr bound} we obtain
$$
V_{r}   \ll  \(N   x^{-1/2} + N^{1/2}  p^{1/4} y^{1/2}(\log p)^{1/2}+   r N x^{-1}\)(\log N)^{\nu^2} .
$$
Using this bound in~\eqref{eq:UrVr-2} and then in~\eqref {eq: T Ur U0}, we see that
 \begin{align*}
&\T_{p,\nu}(\cK,N)\\
 & \quad  \ll  (\log N)^{\nu^2}\sum_{r =1}^R  \frac{1}{r}\( N   x^{-1/2} + N^{1/2}  p^{1/4} y^{1/2}(\log p)^{1/2}+ r N/x\) \\
&  \qquad \quad +   N\exp{\(-\,\frac{\log{N}}{4\log{x}}\)}(\log{N})^{(\nu^2-1)/2}
+\frac{N (\log N)^{\nu-1}(\log x)^{\nu}}{(\log y)^{\nu}}\\
&\quad  \ll  \(N  x^{-1/2}  \log R + N^{1/2}  p^{1/4} y^{1/2}(\log p)^{1/2} \log R +  N R/x\)(\log N)^{\nu^2}\\
&  \qquad \quad +   N\exp{\(-\,\frac{\log{N}}{4\log{x}}\)}(\log{N})^{(\nu^2-1)/2} +\frac{N (\log N)^{\nu-1}(\log x)^{\nu}}{(\log y)^{\nu}}.
\end{align*}
We now choose  $x = (\log p)^{2(\nu^2+1)}$, $y = p^{\varepsilon/3}$ and   recall that by~\eqref{eq: R triv}
we have $\log R \ll\log \log p$.
The result now follows.

 \section{Comments}

Examining~\cite[Proposition~6.2 and Theorem~6.3]{FKM1}, one can easily 
see that the dependence of implied constants on conductor of $\cK$ in the 
assumptions of Theorems~\ref{thm:Sum M} and~\ref{thm:Sum T} is polynomial.

It is easy to see that our approach also applies  to the sums
$$
  \sum_{n\le N}  \mu(n)\e\left(a g^n/q \right)
\mand
  \sum_{n\le N}  \mu(n)\chi\left(ag^n + 1\right)
$$
with some integer $g$ of multiplicative order $t$ modulo a  $q$, see~\cite[Theorem~5.1]{BCFS}.
Using the techniques and results from~\cite{BFGS,Bourg} one is likely to be able to
improve~\cite[Theorem~5.1]{BCFS}, however our approach leads to nontrivial bounds
in a wider range of parameters $N$ and $t$. 

One can also obtain similar results for sums with the divisor function $\tau_\nu(n)$ instead of the
\Mob function and in fact with many other multiplicative functions.

We also note that  a wide class of trace functions, including Kloosterman sums, a very
broad extension of  Lemma~\ref{lem:sum K compl} has been given in~\cite{FKM2}. To formulate 
this we consider the actions 
$$
\gamma(n) = \frac{an + b}{cn + d}
$$
of matrices
$$
\gamma =\begin{pmatrix} a & b\\c & d \end{pmatrix} \in \mathrm{PGL}_2(\F_p). 
$$
Then for a wide class of  of trace functions $\cK(n)$, under some natural {\it nodegenerosity\/}
conditions of the matrices $\gamma_1, \ldots, \gamma_m$ we have 
\begin{equation}
\label{eq: PGL Bound}
 \sum_{n=1}^p \prod_{j=1}^{m}  \cK\(\gamma_j(n)\)  \e(hn/p) \ll p^{1/2} ,
\end{equation}
see~\cite[Corollary]{FKM2}. 
In the most interesting case when $\cK(n)$ is given by  Kloosterman sums $\cK(n) = p^{-(s-1)/2} K_{s,p}(n)$
these nodegenerosity conditions reduce to the request that at least one matrix $\gamma$ is the sequence 
$\gamma_1, \ldots, \gamma_m$ appears an odd number of times.

Combining~\eqref{eq: PGL Bound} (which we actually need only for linear transformations  $n \mapsto an + b$) with the argument of~\cite{OstShp}
one can obtain nontrivial bounds on sums of trace functions $\cK(n)$ over integers $n$ with a fix sum 
of binary digits. More precisely, let $\sigma(n)$  denote the sum of binary digits of $n$. 
For any integers $0\le s\le r$,  we define  $\cG_s(r)$ as the set of integers with $r$ binary digits
 such that the sum of the digits is equal to $s$, that is, 
$$\cG_s(r)=\{0 \leq n< 2^r  \mid \sigma(n)=s\} \mand \#  \cG_s(r) = \binom{r}{s}.$$
Then, the bound~\eqref{eq: PGL Bound}   implies an  analogue  of~\cite[Theorems~1 and~2]{OstShp} 
for a wide class of  trace functions $\cK(n)$
In particular, as in~\cite{OstShp} we see that  if $2^r = p^{1+o(1)}$ then for any $\delta> 0$ there exists some $\eta>0$ such that 
for $r/2 \ge s \ge (\rho_0 + \delta) r$ 
we have 
\begin{equation}
\label{eq: DigitBound}
\sum_{n \in \cG_s(r)} \cK(n) \ll \binom{r}{s}^{1-\eta}, 
\end{equation}
where $\rho_0 = 0.11002786\ldots$  is the root of the equation 
$$
H(\vartheta) = 1/2 \qquad 0 \le \vartheta \le 1/2, 
$$
with the {\it binary entropy function\/}
$$
H(\gamma) = \frac{- \gamma \log \gamma  -  (1-\gamma) 
\log (1-\gamma)}{\log 2}  . 
$$
In particular,  the bound~\eqref{eq: DigitBound} holds for  sums with  Kloosterman sums $\cK(n) = p^{-(s-1)/2} K_{s,p}(n)$. 

It is also interesting to consider sums of trace functions over integers with other digit restrictions.for example, for integers
with fixed binary digits at $s$ prescribed positions, see~\cite{DES} for some   relevant results. 

 \section*{Acknowledgement}

The authors are very grateful to  Emmanuel Kowalski and Philippe Michel 
for their comments and suggestions. The authors also would like to thank
 G{\'e}rald Tenenbaum for outlining the argument
of the proof of Lemma~\ref{lem:Sum Div} which significantly simplified
the original treatment.

During the preparation of this work the first author was supported by
the Russian Science Foundation Grant 14-11-00433 and
the second author was supported in part by
the  Australian Research Council  Grants DP170100786 and DP180100201.


\begin{thebibliography}{9999}


\bibitem{AnDu}
N. Andersen and W. Duke,
 `Modular invariants for real quadratic fields and Kloosterman sums',
{\it Preprint\/}, 2018  (available
from \url{http://arxiv.org/abs/1801.08174}).

\bibitem{BCFS}
W. Banks, A. Conflitti, J. B. Friedlander and I. E. Shparlinski, `Exponential sums over Mersenne numbers', {\it  Compositio Math.\/}, {\bf 140} (2004), 15--30.

\bibitem{BFGS}
W. Banks, A. Conflitti, J. B. Friedlander and I. E. Shparlinski, `Exponential   and character sums with  Mersenne numbers', {\it J. Aust. Math. Soc.\/}, {\bf  92} (2012), 1--13.

\bibitem{BlMil} V. Blomer and D. Mili{\'c}evi{\'c},
`Kloosterman sums in residue classes',
{\it  J. Eur. Math. Soc.\/}, {\bf 17} (2015), 51--69.

\bibitem{BFKMM1} V. Blomer, {\'E}. Fouvry, E. Kowalski, P. Michel and D. Mili{\'c}evi{\'c},
 `On moments of twisted $L$-functions',
 {\it Amer. J.  Math.\/}, {\bf 139} (2017), 707--768.

\bibitem{BFKMM2} V. Blomer, {\'E}. Fouvry, E. Kowalski, P. Michel and D. Mili{\'c}evi{\'c},
 `Some applications of smooth bilinear forms with Kloosterman sums',
  {\it Proc. Steklov Math. Inst.\/},  {\bf 296} (2017),  18--29.

\bibitem{Bourg} J. Bourgain,
`Estimates  on exponential sums related to Diffie-Hellman
distributions', {\it  Geom. and Funct. Anal.\/},
{\bf 15} (2005), 1--34.
%
  \bibitem{BSZ} J. Bourgain,  P. Sarnak  and T. Ziegler,
`Disjointness of \Mob  from horocycle flow',
{\it From Fourier Analysis and Number Theory to Radon Transforms
and Geometry\/},
Devel.  Math.,  {\bf 28}, Springer, New york, 2013,  67--83.

  \bibitem{CLS} T. Cochrane, C. L, Liu and  Z. y.  Zheng,
`Upper bounds on character sums with rational function entries',
{\it Acta Math. Sin. (Engl. Ser.)\/},  {\bf  19} (2003),  327--338.

 \bibitem{CoZh} T. Cochrane and Z. y. Zheng, `Pure and mixed exponential
sums',  {\it Acta Arith.\/}, {\bf   91} (1999), 249--278.

\bibitem{Drap} S. Drappeau, `Sums of Kloosterman sums in arithmetic progressions,
and the error term in the dispersion method',
{\it  Proc. Lond. Math. Soc.\/}, {\bf 114} (2017),  684--732.

\bibitem{DES} R. Dietmann, C. Elsholtz and I. E.~Shparlinski,
`Prescribing the binary digits of squarefree numbers and quadratic residues',
{\it Trans. Amer. Math. Soc.\/}, {\bf  369}  (2017), 8369--8388.

\bibitem{FKM1} {\'E}. Fouvry,  E. Kowalski and
  P.  Michel, `Algebraic trace functions over the primes',
{\it Duke Math. J.\/},  {\bf 163} (2014),  1683--1736.

\bibitem{FKM1.5} {\'E}. Fouvry,  E. Kowalski and
  P.  Michel, `Trace functions over finite fields and their applications',
{\it  Colloquium De Giorgi 2013 and 2014\/}, Publ.    Scuola Normale Superiore, 
vol.~5. Edizioni della Normale, Pisa, 2015.

\bibitem{FKM2} {\'E}. Fouvry,  E. Kowalski and
  P.  Michel, `A study in sums of products',
{\it Phil. Trans. R. Soc., Ser. A\/},  {\bf  373} (2015),  20140309.



\bibitem{FKMRRS} {\'E}. Fouvry,  E. Kowalski,
  P.  Michel,  C. S. Raju, J. Rivat and  K. Soundararajan,
 `On short sums of trace functions',
 {\it Annales de l'Institut Fourier\/},  {\bf 167}  (2017),  423--449.


\bibitem{FMRS} {\'E}. Fouvry,
  P.  Michel,  J. Rivat and A. S{\'a}rk{\"o}zy,
`On the pseudorandomness of the signs of Kloosterman sums',
{\it J. Aust. Math. Soc.\/},  {\bf  77} (2004),  425--436.

\bibitem{FrIw} J. Friedlander and H. Iwaniec,
{\it Opera de Cribro\/}, Colloq. Publ. {\bf 57}
Amer. Math. So., Providence, RI, 2010.

\bibitem{GJK}
K. Gong, C. Jia and M. A. Korolev,
`Shifted character sums with multiplicative coefficients, II',
{\it J. Number Theory\/},  {\bf 178} (2017), 31--39.

\bibitem{IwKow} H. Iwaniec and E. Kowalski,
{\it Analytic number theory\/}, Amer.  Math.  Soc.,
Providence, RI, 2004.




\bibitem{Kir} E. M.  Kiral, `Opposite-sign Kloosterman sum zeta function',
{\it Mathematika\/},  {\bf 62} (2016), 406--429.

\bibitem{Kor1} M. A. Korolev,
`Short Kloosterman sums with weights', {\it Math. Notes\/}, {\bf 88}  (2010), 374--385;
translated from {\it Matem. Zametrki\/} (in Russian).

\bibitem{Kor2} M. A. Korolev,
`On Kloosterman sums with multiplicative coefficients',
{\it Izv. RAN. Ser. Matem.\/}, {\bf 82} (2018), (to appear).

\bibitem{KMS1} E. Kowalski,  P.  Michel and  W. Sawin,
 `Bilinear forms with Kloosterman sums and applications',
{\it Annals  Math.\/}, {\bf 186} (2017), 413--500.

\bibitem{KMS2} E. Kowalski,  P.  Michel and  W. Sawin,
 `Bilinear forms with generalized Kloosterman sums',
{\it Preprint\/}, 2018  (available
from \url{http://arxiv.org/abs/1802.09849}).

\bibitem{Kuz} N. V. Kuznetsov, `The Petersson conjecture for cusp forms of weight zero
and the Linnik conjecture. Sums of Kloosterman sums', {\it Math. USSR-Sb.\/},  {\bf 39} (1981), 299--342.

\bibitem{Lin}
y. V. Linnik, `Additive problems and eigenvalues of the modular operators,'
{\it Proc. Internat. Congr. Mathematicians (Stockholm, 1962)\/}, Inst. Mittag--Leffler, Djursholm, 1963,  270--284.

\bibitem{LSZ1} K. Liu, I. E. Shparlinski and T. P. Zhang,
 `Divisor problem in arithmetic progressions modulo a prime power',
{\it Adv. Math.\/}, {\bf 325} (2018), 459--481.

\bibitem{LSZ2} K. Liu, I. E. Shparlinski and T. P. Zhang,
 `Cancellations between Kloosterman sums modulo a prime power with prime arguments',
{\it Preprint\/}, 2016  (available
from \url{http://arxiv.org/abs/1612.05905}).

\bibitem{OstShp} A. Ostafe and I.~E.~Shparlinski,  
`Multiplicative character sums and products of sparse integers in residue classes', 
{\it Period. Math. Hungarica\/},  {\bf 64} (2012), 247--255.

\bibitem{Sarn}   P. Sarnak,
 `M{\"o}bius  randomness and dynamics',
{\it Not. South Afr. Math. Soc.\/}, {\bf 43} (2012), 89--97.

\bibitem{SarTsim}
P. Sarnak and J. Tsimerman, `On Linnik and Selberg's conjecture about sums of Kloosterman sums',
{\it Algebra, Arithmetic, and Geometry: in Honor of Yu. I. Manin, vol. II\/}, Progress in Mathematics vol.~270,
Birkha{\"u}ser, Boston, MA, 2009, 619--635.

\bibitem{Shp} I. E. Shparlinski, `Bilinear forms with Kloosterman and Gauss sums',
{\it Trans. Amer. Math. Soc.\/}, (to appear).

\bibitem{ShpZha} I. E. Shparlinski and T. P. Zhang,
`Cancellations amongst Kloosterman sums',
{\it Acta Arith.\/}, {\bf  176} (2016), 201--210.

\bibitem{Ten} G.~Tenenbaum, {\it Introduction to analytic and
probabilistic number theory\/}, Grad. Studies Math., vol.~163, Amer. Math. Soc., 2015.

\bibitem{WuXi} J. Wu and   P. Xi,
 `Arithmetic exponent pairs for algebraic trace functions and applications',
{\it Preprint\/}, 2016  (available
from \url{http://arxiv.org/abs/1603.07060}).

\bibitem{Xi-FKM}  P. Xi (with an appendix by  {\'E}. Fouvry,  E. Kowalski and
  P.  Michel),
 `Large sieve inequalities for algebraic trace functions',
{\it Intern. Math.  Res. Notices\/}, {\bf 2017} (2017),  4840--4881.

\bibitem{Wa} A. Walfisz, `Weylsche Exponentialsummen in der neueren Zahlentheorie', Leipzig: B.G. Teubner, 1963.


\end{thebibliography}
\end{document}